\theoremstyle{plain}
\newtheorem{theorem}{Theorem}[section]
\newtheorem{corollary}[theorem]{Corollary}
\newtheorem{lemma}[theorem]{Lemma}
\theoremstyle{definition}
\newtheorem{definition}{Definition}[section]
\theoremstyle{remark}
\newtheorem{remark}{Remark}[section]
\theoremstyle{example}
\numberwithin{equation}{section}
\begin{document}

\title[Dirichlet Forms]
{Dirichlet Forms Constructed from Annihilation Operators on Bernoulli Functionals}

\author{Caishi Wang}
\address[Caishi Wang]
          {School of Mathematics and Statistics,
          Northwest Normal University,
          Lanzhou, Gansu 730070,
          People's Republic of China }
\email{wangcs@nwnu.edu.cn}

\author{Beiping Wang}
\address[Beiping Wang]
         {School of Mathematics and Statistics,
         Northwest Normal University,
          Lanzhou, Gansu 730070,
          People's Republic of China}

\subjclass[2010]{Primary: 60H40; Secondary: 46F25}
\keywords{Bernoulli annihilator, Dirichlet form, Markov semigroup}

\begin{abstract}
The annihilation operators on Bernoulli functionals (Bernoulli annihilators, for short)
and their adjoint operators satisfy a canonical anti-commutation relation (CAR) in equal-time.
As a mathematical structure, Dirichlet forms play an important role in many fields in mathematical physics.
In this paper, we apply the Bernoulli annihilators to constructing Dirichlet forms on Bernoulli functionals.
Let $w$ be a nonnegative function on $\mathbb{N}$.  By using the Bernoulli annihilators,
we first define in a dense subspace of the $L^2$-space of Bernoulli functionals a positive, symmetric bilinear form $\mathcal{E}_w$ associated with $w$.
And then we prove that $\mathcal{E}_w$ is closed and has the contraction property, hence it is a Dirichlet form.
Finally, we consider an interesting semigroup of operators associated with $w$ on the $L^2$-space of Bernoulli functionals, which we call
the $w$-Ornstein-Uhlenbeck semigroup, and by using the Dirichlet form $\mathcal{E}_w$ we show
that the $w$-Ornstein-Uhlenbeck semigroup is a Markov semigroup.
\end{abstract}

\maketitle

\section{Introduction}\label{sec-1}

The annihilation operators on Bernoulli functionals (Bernoulli annihilators, for short) admit much good operation properties with physical meanings.
For example, they together with their adjoint operators satisfy a canonical anti-commutation relation (CAR) in equal-time \cite{w-cl}.
In recent years, these operators have begun to find applications in developing a discrete-time stochastic calculus in
infinite dimensions.
Privault \cite{priv} used the Bernoulli annihilators to define the gradients for Bernoulli functionals in 2008.
In 2010, Nourdin \textit{et al} \cite{nourdin} investigated normal approximation of Rademacher functionals (a special case of Bernoulli functionals)
with the help of the Bernoulli annihilators.
Recently \cite{w-ch}, it has been shown that a wide class of quantum Markov semigroups can be constructed from the Bernoulli annihilators
and their adjoint operators.
As is known, quantum Markov semigroups are quantum analogues of the classical Markov semigroups in probability theory,
which provide a mathematical model for describing the irreversible evolution of quantum systems interacting
with the environment (see, e.g. \cite{ch-fa,meyer,partha}).

A Dirichlet form \cite{albe-1} in an $L^2$-space is a closed, positive, symmetric, densely defined bilinear form that has
the contraction property (also known as the Markov property).
As a mathematical structure, Dirichlet forms have close connections with many objects in probability theory,
quantum mechanics and quantum field theory (see \cite{albe-1} and references therein).
One typical example in this respect is that, under some mild conditions, a Dirichlet form determines a Markov process
through the Markov semigroup associated with it \cite{fuku}.
Thus Dirichlet forms play an important role in construction of Markov processes \cite{fuku}.
The classical Dirichlet forms \cite{beurling, fuku} are usually defined in the $L^2$-spaces of functions on locally compact metric spaces.
In the past three decades, however, Dirichlect forms on noncommutative and even infinite dimensional settings have also appeared successively.
For example, Albeverio and Hoegh-Krohn \cite{albe-2} constructed Dirichlet forms on $C^*$-algebras,
while Hida \textit{et al} \cite{hida} gave a Dirichlet form on white noise functionals by using Hida's differential operators.

In this paper, motivated by the work of Hida \textit{et al} \cite{hida},
we would like to apply the Bernoulli annihilators to constructing Dirichlet forms on Bernoulli functionals.
Our main work is as follows.

Let $\{\partial_k\}_{k\geq 0}$ be the Bernoulli annihilators. For a nonnegative function $w$ on $\mathbb{N}$,
we first define in the $L^2$-space of Bernoulli functionals a positive, symmetric, densely defined bilinear
form $\mathcal{E}_w$ in the following manner
\begin{equation*}
  \mathcal{E}_w(\xi,\eta) = \sum_{k=0}^{\infty}w(k)\langle \partial_k \xi, \partial_k \eta \rangle,
\end{equation*}
where $\langle\cdot,\cdot\rangle$ denotes the inner product in the $L^2$-space of Bernoulli functionals.
We then prove that $\mathcal{E}_w$ is closed and has the contraction property, hence it is a Dirichlet form.
We also obtain an operator representation of $\mathcal{E}_w$.
Finally, we consider an interesting semigroup of operators associated with $w$ on the $L^2$-space of Bernoulli functionals,
which we call the $w$-Ornstein-Uhlenbeck semigroup, and by using the Dirichlet form $\mathcal{E}_w$ we show
that the $w$-Ornstein-Uhlenbeck semigroup is a Markov semigroup.

We mention that the whole family of the annihilation and creation operators acting on Bernoulli functionals is interpreted
as a type of quantum Bernoulli noises \cite{w-cl, w-zh}.
As is seen, however, we only make use of the annihilation operators in the construction of our Dirichlet form.
Thus our work here does not mean that one can construct a Dirichlet form from quantum Bernoulli noises.
In fact, from a physical point of view, any Dirichlet forms cannot be constructed from any quantum Bernoulli noises.

{\bf Notation and conventions.} Throughout, $\mathbb{N}$ always denotes the set of all nonnegative integers.
We denote by $\Gamma$ the finite power set of $\mathbb{N}$, namely
\begin{equation}\label{eq-2-1}
    \Gamma
    = \{\,\sigma \mid \text{$\sigma \subset \mathbb{N}$ and $\#\,\sigma < \infty$} \,\},
\end{equation}
where $\#(\sigma)$ means the cardinality of $\sigma$ as a set.
Unless otherwise stated, letters like $j$, $k$ and $n$ stand for nonnegative integers, namely elements of $\mathbb{N}$.

\section{Bernoulli annihilators}\label{sec-2}

In this section, we briefly recall some necessary notions and facts about Bernoulli functionals and the annihilation operators on them.
For details, we refer to \cite{w-cl}.

Let $\Omega=\{-1,1\}^{\mathbb{N}}$ be the set of all mappings $\omega\colon \mathbb{N} \mapsto \{-1,1\}$, and
$(\zeta_n)_{n\geq 0}$ the sequence of canonical projections on $\Omega$ given by
\begin{equation}\label{eq-2-1}
    \zeta_n(\omega)=\omega(n),\quad \omega\in \Omega.
\end{equation}
Denote by $\mathscr{F}$ the $\sigma$-field on $\Omega$ generated by the sequence $(\zeta_n)_{n\geq 0}$.
Let $(p_n)_{n\geq 0}$ be a given sequence of positive numbers with the property that $0 < p_n < 1$ for all $n\geq 0$.
It is known \cite{priv} that there exists a unique probability measure $\mathbb{P}$ on $\mathscr{F}$ such that
\begin{equation}\label{eq-2-2}
\mathbb{P}\circ(\zeta_{n_1}, \zeta_{n_2}, \cdots, \zeta_{n_k})^{-1}\big\{(\epsilon_1, \epsilon_2, \cdots, \epsilon_k)\big\}
=\prod_{j=1}^k p_j^{\frac{1+\epsilon_j}{2}}(1-p_j)^{\frac{1-\epsilon_j}{2}}.
\end{equation}
for $n_j\in \mathbb{N}$, $\epsilon_j\in \{-1,1\}$ ($1\leq j \leq k$) with $n_i\neq n_j$ when $i\neq j$
and $k\in \mathbb{N}$ with $k\geq 1$. Thus we have a probability measure space $(\Omega, \mathscr{F}, \mathbb{P})$.

Let $Z=(Z_n)_{n\geq 0}$ be the sequence of random variables on $(\Omega, \mathscr{F}, \mathbb{P})$ defined by
\begin{equation}\label{eq-2-3}
   Z_n = \frac{\zeta_n + q_n - p_n}{2\sqrt{p_nq_n}},\quad n\geq0.
\end{equation}
where $q_n = 1-p_n$. Clearly $Z=(Z_n)_{n\geq 0}$ is an independent sequence of random variables on $(\Omega, \mathscr{F}, \mathbb{P})$,
and, for each $n\geq 0$, $Z_n$ has a distribution
\begin{equation}\label{eq-2-4}
  \mathbb{P}\{Z_n = \theta_n\}=p_n,\quad
    \mathbb{P}\{Z_n = -1/\theta_n\}=q_n,\quad n\geq 0
\end{equation}
with $\theta_n = \sqrt{q_n/p_n}$.
To be convenient, we set $\mathscr{F}_{-1}=\{\emptyset, \Omega\}$ and
\begin{equation*}
\mathscr{F}_n = \sigma(Z_k; 0\leq k \leq n),
\end{equation*}
the $\sigma$-field generated by $(Z_k)_{0\leq k \leq n}$ for $ n \geq 0$.
By convention $\mathbb{E}$ will denote the expectation with respect to $\mathbb{P}$.

Let $L^2(\Omega)$ be the space of square integrable random variables on $(\Omega, \mathscr{F}, \mathbb{P})$, namely
\begin{equation}\label{eq-2-5}
  L^2(\Omega) = L^2(\Omega, \mathscr{F}, \mathbb{P}).
\end{equation}
We denote by
$\langle\cdot,\cdot\rangle$ the usual inner product of the space $L^2(\Omega)$, and by $\|\cdot\|$ the corresponding norm.
It is known \cite{priv} that $Z$ has the chaotic representation property.
Thus $L^2(\Omega)$ has $\{Z_{\sigma}\mid \sigma \in \Gamma\}$ as its orthonormal basis,
where $Z_{\emptyset}=1$ and
\begin{equation}\label{eq-2-6}
    Z_{\sigma} = \prod_{j\in \sigma}Z_j,\quad \text{$\sigma \in \Gamma$, $\sigma \neq \emptyset$},
\end{equation}
which shows that $L^2(\Omega)$ is an infinite dimensional real Hilbert space.

\begin{remark}\label{rem-2-1}
It is easy to see that $\mathscr{F}=(Z_n; n \geq 0)$. Thus $\mathscr{F}$-measurable functions on $\Omega$ are usually known as
functionals of $Z$, or Bernoulli functionals simply. In particular, functions in $L^2(\Omega)$ are called square integrable
Bernoulli functionals.
\end{remark}

\begin{lemma}\label{lem-2-1}\cite{w-cl}
For each $k\in \mathbb{N}$, there exists a bounded operator $\partial_k$ on
$L^2(\Omega)$ such that
\begin{equation}\label{eq-2-7}
    \partial_k Z_{\sigma} = \mathbf{1}_{\sigma}(k)Z_{\sigma\setminus k},\quad
    \sigma \in \Gamma,
\end{equation}
where $\sigma\setminus k=\sigma\setminus \{k\}$
and $\mathbf{1}_{\sigma}(k)$ the indicator of $\sigma$ as a subset of $\mathbb{N}$.
\end{lemma}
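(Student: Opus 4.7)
The plan is to define $\partial_k$ first on the dense algebraic span $\mathcal{D} := \operatorname{span}\{Z_\sigma \mid \sigma \in \Gamma\}$ by the prescribed formula extended linearly, then verify that it is bounded there, and finally extend by continuity to all of $L^2(\Omega)$. Since $\{Z_\sigma\}_{\sigma\in\Gamma}$ is an orthonormal basis, every $\xi \in \mathcal{D}$ has a unique finite expansion $\xi = \sum_{\sigma\in\Gamma} a_\sigma Z_\sigma$, and we set
\begin{equation*}
\partial_k \xi := \sum_{\sigma\in\Gamma} a_\sigma \mathbf{1}_\sigma(k)\, Z_{\sigma\setminus k}.
\end{equation*}
This is well defined and linear, and it visibly satisfies \eqref{eq-2-7} on the basis.

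The key step is the boundedness estimate. I would reindex the sum above according to its image: for each $\tau\in\Gamma$ with $k\notin\tau$, the only $\sigma$ with $k\in\sigma$ and $\sigma\setminus k=\tau$ is $\sigma=\tau\cup\{k\}$. Hence
\begin{equation*}
\partial_k \xi = \sum_{\tau\in\Gamma,\, k\notin\tau} a_{\tau\cup\{k\}}\, Z_\tau,
\end{equation*}
and by orthonormality
\begin{equation*}
\|\partial_k \xi\|^2 = \sum_{\tau\in\Gamma,\, k\notin\tau} |a_{\tau\cup\{k\}}|^2 \;\leq\; \sum_{\sigma\in\Gamma}|a_\sigma|^2 = \|\xi\|^2.
\end{equation*}
Thus $\partial_k$ restricted to $\mathcal{D}$ is a bounded linear operator of norm at most $1$.

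Since $\mathcal{D}$ is dense in $L^2(\Omega)$, the bounded operator $\partial_k\colon \mathcal{D}\to L^2(\Omega)$ admits a unique bounded linear extension (still denoted $\partial_k$) to all of $L^2(\Omega)$, with $\|\partial_k\|\leq 1$. The identity \eqref{eq-2-7} persists for the extension because it holds on each basis vector $Z_\sigma\in\mathcal{D}$ by construction. There is no real obstacle here: the argument is a standard ``define on the orthonormal basis, estimate by a reindexing, extend by density'' routine; the only point requiring mild care is the reindexing that produces the norm bound, and taking $\xi = Z_{\{k\}}$ also shows that the bound is attained, so $\|\partial_k\|=1$.
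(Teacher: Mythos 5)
Your argument is correct: defining $\partial_k$ linearly on the span of the orthonormal basis $\{Z_\sigma\}_{\sigma\in\Gamma}$, obtaining $\|\partial_k\xi\|\leq\|\xi\|$ by the reindexing $\sigma\mapsto\sigma\setminus\{k\}$ (a bijection from $\{\sigma: k\in\sigma\}$ onto $\{\tau: k\notin\tau\}$), and extending by density is exactly the standard route, and the observation that $Z_{\{k\}}$ attains the bound gives $\|\partial_k\|=1$. Note that the paper itself offers no proof of this lemma --- it is quoted from the reference \cite{w-cl} --- so there is nothing internal to compare against; your proof fills that gap correctly and with no missing steps.
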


\begin{lemma}\label{lem-2-2}\cite{w-cl}
Let $k\in \mathbb{N}$.  Then $\partial_k^{\ast}$, the adjoint of  operator $\partial_k$, has following property:
\begin{equation}\label{eq-2-8}
    \partial_k^{\ast} Z_{\sigma}
    = [1-\mathbf{1}_{\sigma}(k)]Z_{\sigma\cup k}\quad
    \sigma \in \Gamma,
\end{equation}
where $\sigma\cup k=\sigma\cup \{k\}$.
\end{lemma}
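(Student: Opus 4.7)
The plan is to verify the formula by computing matrix coefficients of $\partial_k^\ast$ in the orthonormal basis $\{Z_\sigma \mid \sigma \in \Gamma\}$, using Lemma~\ref{lem-2-1} and the defining identity $\langle \partial_k^\ast f, g\rangle = \langle f, \partial_k g\rangle$. Since $\partial_k$ is bounded, $\partial_k^\ast$ exists as a bounded operator on $L^2(\Omega)$, and it is determined by its matrix entries against the basis.

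First I would fix $\tau, \sigma \in \Gamma$ and compute
\begin{equation*}
\langle \partial_k^\ast Z_\tau, Z_\sigma\rangle = \langle Z_\tau, \partial_k Z_\sigma\rangle = \mathbf{1}_\sigma(k)\,\langle Z_\tau, Z_{\sigma\setminus k}\rangle = \mathbf{1}_\sigma(k)\,\delta_{\tau,\,\sigma\setminus k},
\end{equation*}
where the middle equality uses Lemma~\ref{lem-2-1} and the last equality uses orthonormality of the basis.

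Next I would rewrite the condition ``$k \in \sigma$ and $\tau = \sigma\setminus\{k\}$'' in terms of $\tau$: it is equivalent to ``$k \notin \tau$ and $\sigma = \tau \cup \{k\}$''. Consequently,
\begin{equation*}
\langle \partial_k^\ast Z_\tau, Z_\sigma\rangle = [1 - \mathbf{1}_\tau(k)]\,\delta_{\sigma,\,\tau\cup k}.
\end{equation*}
Since $\{Z_\sigma \mid \sigma\in\Gamma\}$ is an orthonormal basis of $L^2(\Omega)$, the vector $\partial_k^\ast Z_\tau$ is recovered from its Fourier coefficients, which yields $\partial_k^\ast Z_\tau = [1-\mathbf{1}_\tau(k)] Z_{\tau\cup k}$, as claimed.

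There is no real obstacle here; the only point that needs a moment of care is the equivalence of the two ways of describing the single nonzero matrix coefficient (indexing by $\sigma$ versus by $\tau$), which turns the factor $\mathbf{1}_\sigma(k)$ arising from $\partial_k$ into the factor $[1-\mathbf{1}_\tau(k)]$ appearing in the statement.
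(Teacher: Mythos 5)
Your argument is correct and complete: the matrix-coefficient computation $\langle \partial_k^\ast Z_\tau, Z_\sigma\rangle = \mathbf{1}_\sigma(k)\,\delta_{\tau,\sigma\setminus k} = [1-\mathbf{1}_\tau(k)]\,\delta_{\sigma,\tau\cup k}$ and the re-indexing step are exactly right, and since the basis is orthonormal (and the space real, so no conjugation issues arise), recovering $\partial_k^\ast Z_\tau$ from its Fourier coefficients is legitimate. Note that the paper itself gives no proof of this lemma --- it is quoted from \cite{w-cl} --- so there is nothing to compare against; your verification is the standard one and would serve as a proof.
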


The operator $\partial_k$ and its adjoint $\partial_k^{\ast}$ are referred to as the annihilation operator and
creation operator at site $k$, respectively.
The next lemma shows that these operators satisfy a canonical anti-commutation relations (CAR) in equal-time.

\begin{lemma}\label{lem-2-3}\cite{w-cl}
Let $k$, $l\in \mathbb{N}$. Then it holds true that
\begin{equation}\label{eq-2-9}
    \partial_k \partial_l = \partial_l\partial_k,\quad
    \partial_k^{\ast} \partial_l^{\ast} = \partial_l^{\ast}\partial_k^{\ast},\quad
    \partial_k^{\ast} \partial_l = \partial_l\partial_k^{\ast}\quad (k\neq l)
\end{equation}
and
\begin{equation}\label{eq-2-10}
   \partial_k\partial_k= \partial_k^{\ast}\partial_k^{\ast}=0,\quad
   \partial_k\partial_k^{\ast} + \partial_k^{\ast}\partial_k=I,
\end{equation}
where $I$ is the identity operator on $L^2(\Omega)$.
\end{lemma}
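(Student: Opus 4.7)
The plan is to verify each identity first on the orthonormal basis $\{Z_\sigma \mid \sigma \in \Gamma\}$ of $L^2(\Omega)$ using the explicit formulas \eqref{eq-2-7} and \eqref{eq-2-8} of Lemmas~\ref{lem-2-1} and~\ref{lem-2-2}, and then extend the identities to the whole space by linearity and boundedness. Since every operator in sight is bounded, an identity that holds on a total set of vectors automatically holds everywhere.

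For the three commutation relations in \eqref{eq-2-9}, I would fix $\sigma \in \Gamma$ and $k \neq l$ and compute both compositions applied to $Z_\sigma$. For instance, $\partial_k \partial_l Z_\sigma = \mathbf{1}_\sigma(l)\mathbf{1}_{\sigma\setminus l}(k) Z_{(\sigma\setminus l)\setminus k}$; since $k\neq l$, the product $\mathbf{1}_\sigma(l)\mathbf{1}_{\sigma\setminus l}(k)$ equals $\mathbf{1}_\sigma(k)\mathbf{1}_\sigma(l)$ and $(\sigma\setminus l)\setminus k = (\sigma\setminus k)\setminus l$, giving the same expression as $\partial_l\partial_k Z_\sigma$. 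The creator commutation is analogous after noticing $(\sigma\cup l)\cup k = (\sigma\cup k)\cup l$ and that $[1-\mathbf{1}_\sigma(l)][1-\mathbf{1}_{\sigma\cup l}(k)]$ is symmetric in $k,l$ when $k\neq l$. For the mixed relation $\partial_k^\ast \partial_l = \partial_l \partial_k^\ast$ with $k\neq l$, I would again apply both sides to $Z_\sigma$ and check that the resulting coefficient is $\mathbf{1}_\sigma(l)[1-\mathbf{1}_\sigma(k)]$ in either order, using that $k\neq l$ makes $\mathbf{1}_{\sigma\setminus l}(k) = \mathbf{1}_\sigma(k)$ and $\mathbf{1}_{\sigma\cup k}(l) = \mathbf{1}_\sigma(l)$.

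The nilpotency in \eqref{eq-2-10} is immediate: $\partial_k \partial_k Z_\sigma = \mathbf{1}_\sigma(k)\mathbf{1}_{\sigma\setminus k}(k)Z_{(\sigma\setminus k)\setminus k} = 0$ because $k \notin \sigma\setminus k$, and symmetrically $[1-\mathbf{1}_{\sigma\cup k}(k)] = 0$ yields $\partial_k^\ast \partial_k^\ast = 0$. For the equal-time anti-commutator, I would compute
\begin{equation*}
\partial_k\partial_k^\ast Z_\sigma = [1-\mathbf{1}_\sigma(k)]\,\mathbf{1}_{\sigma\cup k}(k)\,Z_{(\sigma\cup k)\setminus k} = [1-\mathbf{1}_\sigma(k)]\,Z_\sigma,
\end{equation*}
using that $(\sigma\cup k)\setminus k = \sigma$ precisely when $k\notin \sigma$, and
\begin{equation*}
\partial_k^\ast\partial_k Z_\sigma = \mathbf{1}_\sigma(k)\,[1-\mathbf{1}_{\sigma\setminus k}(k)]\,Z_{(\sigma\setminus k)\cup k} = \mathbf{1}_\sigma(k)\,Z_\sigma.
\end{equation*}
Adding the two gives $Z_\sigma$, which is $I Z_\sigma$, as required.

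There is no real obstacle here; the argument is a bookkeeping exercise with indicator functions. The only point requiring a bit of care is the case distinction for the mixed relation: one must genuinely use $k\neq l$ to swap $\mathbf{1}_{\sigma\cup k}(l)$ with $\mathbf{1}_\sigma(l)$ and $\mathbf{1}_{\sigma\setminus l}(k)$ with $\mathbf{1}_\sigma(k)$, since at $k=l$ the identity would fail and is replaced by the anti-commutator $I$. Once the identities are checked on the total set $\{Z_\sigma\}$, boundedness of $\partial_k$ and $\partial_k^\ast$ (Lemma~\ref{lem-2-1}) extends them to all of $L^2(\Omega)$.
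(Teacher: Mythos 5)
Your proposal is correct. Note that the paper does not prove Lemma~\ref{lem-2-3} at all --- it is quoted from the reference \cite{w-cl} --- so there is no in-paper argument to compare against. Your verification on the orthonormal basis $\{Z_\sigma \mid \sigma\in\Gamma\}$ via the explicit formulas \eqref{eq-2-7} and \eqref{eq-2-8}, followed by extension by linearity and boundedness, is the standard and essentially only natural route; the indicator bookkeeping is carried out correctly in each case, including the crucial use of $k\neq l$ to identify $\mathbf{1}_{\sigma\setminus l}(k)$ with $\mathbf{1}_\sigma(k)$ and $(\sigma\cup k)\setminus l$ with $(\sigma\setminus l)\cup k$, and the equal-time computation $\partial_k\partial_k^\ast Z_\sigma + \partial_k^\ast\partial_k Z_\sigma = [1-\mathbf{1}_\sigma(k)]Z_\sigma + \mathbf{1}_\sigma(k)Z_\sigma = Z_\sigma$.
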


\section{Forms constructed from Bernoulli annihilators}\label{sec-3}

In the present section, we show how to use the annihilation operators $\{\partial_k\}_{k \geq 0}$ to construct a closed,
positive, symmetric and densely defined bilinear form in $L^2(\Omega)$.

We first prove a convergence result concerning $\{\partial_k\}_{k \geq 0}$, which will play a
key role in our later discussions.

For a nonnegative function $w\colon \mathbb{N}\rightarrow \mathbb{R}$, we define the $w$-counting measure $\#_w(\cdot)$ as
\begin{equation}\label{eq-3-1}
  \#_w(\sigma) = \sum_{j\in \sigma}w(j),\quad \sigma\in \Gamma,
\end{equation}
where $\#_w(\sigma)=0$ if $\sigma=\emptyset$. Clearly $0\leq \#_w(\sigma) < \infty$ for all $\sigma\in \Gamma$.

\begin{theorem}\label{thr-3-1}
Let $w\colon \mathbb{N}\rightarrow \mathbb{R}$ be a nonnegative function. Define $\mathsf{D}_w$ as the linear subspace of $L^2(\Omega)$ given by
\begin{equation}\label{eq-3-2}
  \mathsf{D}_w= \Big\{\, \xi \in L^2(\Omega) \Bigm|\  \sum_{\sigma \in \Gamma} \#_w(\sigma) |\langle Z_{\sigma}, \xi\rangle|^2 < \infty\,\Big\}.
\end{equation}
Then $\mathsf{D}_w$ is a dense linear subspace of $L^2(\Omega)$, and moreover, for all $\xi$, $\eta \in \mathsf{D}_w$, the following series
is absolutely convergent:
\begin{equation}\label{eq-3-3}
  \sum_{k=0}^{\infty}w(k)\langle \partial_k\xi, \partial_k \eta\rangle.
\end{equation}
\end{theorem}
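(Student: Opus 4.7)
The plan is to reduce both claims to the identity $\sum_{k=0}^\infty w(k)\|\partial_k\xi\|^2 = \sum_{\sigma\in\Gamma}\#_w(\sigma)|\langle Z_\sigma,\xi\rangle|^2$, and then to dispatch the density claim by exhibiting an obviously dense subspace inside $\mathsf{D}_w$, and the convergence claim by a double application of Cauchy--Schwarz.

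First I would establish density. Since $\{Z_\sigma \mid \sigma\in\Gamma\}$ is an orthonormal basis of $L^2(\Omega)$, the linear span $\mathcal{S} := \mathrm{span}\{Z_\sigma \mid \sigma\in\Gamma\}$ is dense. For any $\xi\in\mathcal{S}$ only finitely many coefficients $\langle Z_\sigma,\xi\rangle$ are nonzero, so the series defining membership in $\mathsf{D}_w$ collapses to a finite sum; hence $\mathcal{S}\subset\mathsf{D}_w$, and density follows immediately.

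Next, I would prove the key identity. By Lemma~\ref{lem-2-1}, $\partial_k Z_\sigma = \mathbf{1}_\sigma(k)Z_{\sigma\setminus k}$, and the map $\sigma\mapsto\sigma\setminus\{k\}$ is a bijection from $\{\sigma\in\Gamma : k\in\sigma\}$ onto $\{\tau\in\Gamma : k\notin\tau\}$, sending distinct $\sigma$'s to distinct (and orthogonal) basis vectors. Expanding $\xi=\sum_\sigma\langle Z_\sigma,\xi\rangle Z_\sigma$ and applying Parseval therefore gives $\|\partial_k\xi\|^2 = \sum_{\sigma\ni k}|\langle Z_\sigma,\xi\rangle|^2$. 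Multiplying by $w(k)$, summing over $k$, and swapping the order of summation (justified by Tonelli, as all summands are nonnegative) yields
\begin{equation*}
\sum_{k=0}^\infty w(k)\|\partial_k\xi\|^2
= \sum_{\sigma\in\Gamma}|\langle Z_\sigma,\xi\rangle|^2 \sum_{k\in\sigma}w(k)
= \sum_{\sigma\in\Gamma}\#_w(\sigma)|\langle Z_\sigma,\xi\rangle|^2,
\end{equation*}
which is finite precisely when $\xi\in\mathsf{D}_w$.

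Finally, for $\xi,\eta\in\mathsf{D}_w$, Cauchy--Schwarz in $L^2(\Omega)$ gives $|\langle\partial_k\xi,\partial_k\eta\rangle|\leq\|\partial_k\xi\|\,\|\partial_k\eta\|$, and then Cauchy--Schwarz applied to the series $\sum_k\sqrt{w(k)}\|\partial_k\xi\|\cdot\sqrt{w(k)}\|\partial_k\eta\|$ produces the bound $\bigl(\sum_k w(k)\|\partial_k\xi\|^2\bigr)^{1/2}\bigl(\sum_k w(k)\|\partial_k\eta\|^2\bigr)^{1/2}$, which is finite by the previous step. This proves absolute convergence of \eqref{eq-3-3}. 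The only step that requires any real care is the interchange of summations in the key identity, but it is immediate from nonnegativity; everything else is routine Hilbert-space bookkeeping, so I do not anticipate a serious obstacle.
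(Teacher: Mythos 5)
Your proposal is correct and follows essentially the same route as the paper: density via the basis vectors $Z_\sigma$ lying in $\mathsf{D}_w$, the identity $\sum_k w(k)\|\partial_k\xi\|^2=\sum_{\sigma\in\Gamma}\#_w(\sigma)|\langle Z_\sigma,\xi\rangle|^2$ obtained from Lemma~\ref{lem-2-1} and a nonnegative interchange of summation, and Cauchy--Schwarz for the absolute convergence. The only difference is that you spell out the "direct calculation" (injectivity of $\sigma\mapsto\sigma\setminus\{k\}$ and Tonelli) that the paper leaves implicit.
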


\begin{proof}
Clearly $\mathsf{D}_w$ is a linear subspace of $L^2(\Omega)$. On the other hand,
for each $\tau \in \Gamma$, in view of the fact that $\#_w(\tau) <\infty$, we have
\begin{equation*}
  \sum_{\sigma \in \Gamma} \#_w(\sigma) |\langle Z_{\sigma}, Z_{\tau}\rangle|^2 = \#_w(\tau)  <\infty.
\end{equation*}
Thus $\{Z_{\tau} \mid \tau\in \Gamma\} \subset \mathsf{D}_w$, which implies that
$\mathsf{D}_w$ is dense in $L^2(\Omega)$.

Now let $\xi$, $\eta \in \mathsf{D}_w$. Then,
for each $k\geq 0$, it follows from Lemma~\ref{lem-2-1} as well as the expansion $\xi = \sum_{\sigma \in \Gamma}\langle Z_{\sigma}, \xi\rangle Z_{\sigma}$  that
\begin{equation}\label{eq-3-4}
  \partial_k \xi
  = \sum_{\sigma \in \Gamma} \mathbf{1}_{\sigma}(k)\langle Z_{\sigma}, \xi\rangle Z_{\sigma\setminus k}.
\end{equation}
Thus, by a direct calculation, we have
\begin{equation*}
  \sum_{k=0}^{\infty}w(k)\|\partial_k \xi\|^2
  = \sum_{k=0}^{\infty}\sum_{\sigma \in \Gamma}\mathbf{1}_{\sigma}(k) w(k)|\langle Z_{\sigma}, \xi\rangle|^2
  = \sum_{\sigma \in \Gamma}\#_w(\sigma) |\langle Z_{\sigma}, \xi\rangle|^2
  <\infty.
\end{equation*}
With the same argument, we have
\begin{equation*}
  \sum_{k=0}^{\infty}w(k)\|\partial_k \eta\|^2
   <\infty.
\end{equation*}
It then follows from these two equalities that
\begin{equation*}
  \sum_{k=0}^{\infty} w(k)|\langle \partial_k\xi, \partial_k \eta\rangle|
    \leq \Big[\sum_{k=0}^{\infty}w(k)\|\partial_k \xi\|^2\Big]^{\frac{1}{2}}
       \Big[\sum_{k=0}^{\infty}w(k)\|\partial_k \eta\|^2\Big]^{\frac{1}{2}}
       <\infty,
\end{equation*}
which implies that the series $\sum_{k=0}^{\infty}w(k)\langle \partial_k\xi, \partial_k \eta\rangle$ is absolutely convergent.
\end{proof}

In view of the above theorem, we come naturally to the next definition, which introduces our main object of study.

\begin{definition}\label{def-3-1}
For a nonnegative function $w\colon \mathbb{N}\rightarrow \mathbb{R}$, we define $\mathcal{E}_w$ as
\begin{equation}\label{eq-3-5}
  \mathcal{E}_w(\xi, \eta) =\sum_{k=0}^{\infty}w(k)\langle \partial_k\xi, \partial_k \eta\rangle,\quad \xi,\, \eta \in \mathsf{D}_w,
\end{equation}
and call $(\mathcal{E}_w,\mathsf{D}_w)$ the $w$-energy form.
\end{definition}

It is easy to see that $(\mathcal{E}_w,\mathsf{D}_w)$ is a positive, symmetric and densely defined bilinear form in $L^2(\Omega)$.
Note that if we take $w(k)\equiv 1$, then we get
\begin{equation*}
  \mathcal{E}_w(\xi, \xi) =\int_{\Omega}\|\nabla\xi(\omega)\|_{l^2(\mathbb{N})}^2d\mathbb{P}(\omega),\quad \xi\in \mathsf{D}_w,
\end{equation*}
where $\nabla$ denotes the gradient operator, which is defined on $\mathsf{D}_w$ and valued in $L^2(\Omega\!\times\! \mathbb{N})$, the space of
jointly square integrable stochastic processes on $(\Omega, \mathscr{F}, \mathbb{P})$ (see \cite{w-lc} for details).
This justifies the name of $(\mathcal{E}_w,\mathsf{D}_w)$.

In order to examine basic properties of the $w$-energy form $(\mathcal{E}_w,\mathsf{D}_w)$,
we introduce another bilinear form $\widehat{\mathcal{E}}_w$ on $\mathsf{D}_w$ as
\begin{equation}\label{eq-3-6}
  \widehat{\mathcal{E}}_w(\xi, \eta) = \mathcal{E}_w(\xi, \eta) + \langle \xi,\eta\rangle,\quad \xi,\, \eta \in \mathsf{D}_w,
\end{equation}
where $\langle\cdot,\cdot\rangle$ is the inner product of $L^2(\Omega)$.
It is then easy to see that $\widehat{\mathcal{E}}_w$ is again an inner product on $\mathsf{D}_w$. We denote by $\|\cdot\|_{\widehat{\mathcal{E}}_w}$ the norm induced by $\widehat{\mathcal{E}}_w$.
A direct calculation gives that
\begin{equation}\label{eq-3-7}
  \|\xi\|_{\widehat{\mathcal{E}}_w}^2= \sum_{\sigma \in \Gamma}[\#_w(\sigma) +1]|\langle Z_{\sigma}, \xi\rangle|^2,\quad \xi\in \mathsf{D}_w,
\end{equation}
where $\#_w(\cdot)$ is the $w$-counting measure as defined by (\ref{eq-3-1}).

\begin{theorem}\label{thr-3-2}
Let $w\colon \mathbb{N}\rightarrow \mathbb{R}$ be a nonnegative function. Then the $w$-energy form $(\mathcal{E}_w,\mathsf{D}_w)$ is closed,
namely $(\mathsf{D}_w,\widehat{\mathcal{E}}_w)$ is a Hilbert space.
\end{theorem}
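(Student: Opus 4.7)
The plan is to recognize $(\mathsf{D}_w, \widehat{\mathcal{E}}_w)$ as (isometrically isomorphic to) a weighted $\ell^2$-space over the index set $\Gamma$, which is automatically complete. Concretely, the chaos expansion map $\xi \mapsto \bigl(\langle Z_\sigma, \xi\rangle\bigr)_{\sigma\in\Gamma}$ together with formula (\ref{eq-3-7}) identifies $\mathsf{D}_w$ with the space of sequences $(a_\sigma)_{\sigma\in\Gamma}$ satisfying $\sum_{\sigma\in\Gamma}[\#_w(\sigma)+1]|a_\sigma|^2 < \infty$, and $\widehat{\mathcal{E}}_w$ with the obvious weighted inner product. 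So the whole statement reduces to a standard completeness verification.

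For the actual argument, I would take a Cauchy sequence $(\xi_n)_{n\ge 1}$ in $(\mathsf{D}_w, \widehat{\mathcal{E}}_w)$. Since $\|\cdot\| \le \|\cdot\|_{\widehat{\mathcal{E}}_w}$ by (\ref{eq-3-7}) and the fact that $\#_w(\sigma)\ge 0$, the sequence is Cauchy in $L^2(\Omega)$ and thus converges in $L^2(\Omega)$ to some $\xi$. In particular $\langle Z_\sigma, \xi_n\rangle \to \langle Z_\sigma, \xi\rangle$ for each $\sigma\in\Gamma$. I then need to upgrade this to convergence in the $\widehat{\mathcal{E}}_w$-norm and to verify $\xi\in\mathsf{D}_w$.

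The key step is the usual truncation trick. Given $\varepsilon>0$, pick $N$ with $\|\xi_n-\xi_m\|_{\widehat{\mathcal{E}}_w}^2 < \varepsilon$ for all $n,m\ge N$. For any finite $F\subset\Gamma$, one then has
\begin{equation*}
  \sum_{\sigma\in F}[\#_w(\sigma)+1]\bigl|\langle Z_\sigma, \xi_n-\xi_m\rangle\bigr|^2 < \varepsilon.
\end{equation*}
Letting $m\to\infty$ (the sum is finite, so pointwise convergence of coefficients is enough) gives
\begin{equation*}
  \sum_{\sigma\in F}[\#_w(\sigma)+1]\bigl|\langle Z_\sigma, \xi_n-\xi\rangle\bigr|^2 \le \varepsilon,
\end{equation*}
and then taking the supremum over finite $F\subset\Gamma$ yields $\|\xi_n-\xi\|_{\widehat{\mathcal{E}}_w}^2\le \varepsilon$ for $n\ge N$. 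This simultaneously shows that $\xi_n-\xi\in\mathsf{D}_w$ (hence $\xi\in\mathsf{D}_w$ by linearity) and that $\xi_n\to\xi$ in the $\widehat{\mathcal{E}}_w$-norm.

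I do not expect a genuine obstacle here; the only point that requires a line of care is not asserting termwise convergence of an infinite series at once, but rather truncating to a finite $F$ before passing to the limit in $m$, and only afterwards taking $\sup_F$. This is the standard move that makes weighted $\ell^2$-spaces Hilbert spaces, and it carries over verbatim through the chaos expansion.
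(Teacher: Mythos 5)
Your proposal is correct and follows essentially the same route as the paper: pass to the $L^2(\Omega)$ limit of the Cauchy sequence, then upgrade to $\widehat{\mathcal{E}}_w$-convergence via a limiting argument on the weighted coefficient sums. The only cosmetic difference is that you prove the needed inequality by truncating to finite $F\subset\Gamma$ and taking a supremum, whereas the paper cites Fatou's lemma for the counting measure on $\Gamma$ --- these are the same argument.
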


\begin{proof}
We need only to show that $\mathsf{D}_w$ is complete with respect to norm $\|\cdot\|_{\widehat{\mathcal{E}}_w}$.

Let $\{\xi_n\}_{n\geq 1}\subset \mathsf{D}_w$ be a Cauchy sequence with respect to norm $\|\cdot\|_{\widehat{\mathcal{E}}_w}$.
Then it is also a Cauchy sequence with respect to norm $\|\cdot\|$ since $\|\cdot\|\leq \|\cdot\|_{\widehat{\mathcal{E}}_w}$.
Thus there exists $\xi \in L^2(\Omega)$ such that $\|\xi_n -\xi\|\rightarrow 0$ as $n\rightarrow \infty$. Now we show that
$\xi \in \mathsf{D}_w$ and  $\|\xi_n -\xi\|_{\widehat{\mathcal{E}}_w}\rightarrow 0$ as $n\rightarrow \infty$.

Let $\epsilon > 0$. Then, by the property that $\{\xi_n\}_{n\geq 1}$ is a Cauchy sequence with respect to $\|\cdot\|_{\widehat{\mathcal{E}}_w}$, we
know that there exists a positive integer $K\geq 1$ such that
\begin{equation}\label{eq-3-8}
 \sum_{\sigma \in \Gamma}[\#_w(\sigma) +1]|\langle Z_{\sigma}, \xi_m -\xi_n\rangle|^2 =\|\xi_m -\xi_n\|_{\widehat{\mathcal{E}}_w}^2
  < \epsilon^2,\quad \forall\, m,\, n > K.
\end{equation}
For all $n>K$, since
\begin{equation*}
  \lim_{m\to \infty}[\#_w(\sigma) +1]|\langle Z_{\sigma}, \xi_m -\xi_n\rangle|^2 = [\#_w(\sigma) +1]|\langle Z_{\sigma}, \xi -\xi_n\rangle|^2,\quad
  \forall\,\sigma \in \Gamma,
\end{equation*}
it follows by applying the well-known Fatou's Lemma \cite{reed} to (\ref{eq-3-8}) that
\begin{equation}\label{eq-3-9}
 \sum_{\sigma \in \Gamma}[\#_w(\sigma) +1]|\langle Z_{\sigma}, \xi -\xi_n\rangle|^2\leq \epsilon^2,
\end{equation}
which implies that $\xi-\xi_n \in \mathsf{D}_w$, hence $\xi \in \mathsf{D}_w$ and
\begin{equation*}
\|\xi-\xi_n \|_{\widehat{\mathcal{E}}_w} = \Big\{\sum_{\sigma \in \Gamma}[\#_w(\sigma) +1]|\langle Z_{\sigma}, \xi -\xi_n\rangle|^2\Big\}^{1/2}\leq \epsilon.
\end{equation*}
Therefore $\|\xi_n -\xi\|_{\widehat{\mathcal{E}}_w}\rightarrow 0$ as $n\rightarrow \infty$, namely $\{\xi_n\}$ converges in $\mathsf{D}_w$ with
respect to norm $\|\cdot\|_{\widehat{\mathcal{E}}_w}$.
\end{proof}

\section{Contraction property}\label{sec-4}

Let $w\colon \mathbb{N}\rightarrow \mathbb{R}$ be a given nonnegative function.
In this section, we further prove that the $w$-energy form $(\mathcal{E}_w,\mathsf{D}_w)$ has the contraction property, hence it is a Dirichlet form.

We first make some necessary preparations. Let $\mathsf{S}=\mathrm{Span}\{Z_{\sigma}\mid \sigma\in \Gamma\}$, the linear subspace of $L^2(\Omega)$ spanned by
the system $\{Z_{\sigma}\mid \sigma\in \Gamma\}$, which is obviously a dense linear subspace of $L^2(\Omega)$.
For $n\geq 0$, we denote by $\mathsf{S}_n$ the linear subspace of all $\mathscr{F}_n$-measurable random variables in $L^2(\Omega)$.
It can be verified that $\mathsf{S}_n$ is a $2^{n+1}$-dimensional closed subspace of $L^2(\Omega)$, and has an orthonormal basis $\{Z_{\sigma}\mid \sigma\in \Gamma\!_{n]}\}$, where $\Gamma_{n]}=\{\sigma\in \Gamma\mid \max \sigma \leq n\}$. And moreover, these subspaces have the following relation
\begin{equation*}
\bigcup_{n=0}^{\infty}\mathsf{S}_n = \mathsf{S}.
\end{equation*}

Recall that elements of $\Omega$ are mappings $\omega\colon \mathbb{N}\rightarrow \{-1, 1\}$. For $k\geq 0$ and
$\omega \in \Omega$, we can naturally define two mappings $\omega^+_k$, $\omega^-_k \colon \mathbb{N}\rightarrow \{-1, 1\}$ as
\begin{equation}\label{eq-4-1}
\begin{array}{ll}
  \omega^+_k(n)=
\left\{
  \begin{array}{ll}
    1, & \hbox{$n=k$;} \\
    \omega(n), & \hbox{$n\neq k$,}
  \end{array}
\right.&
\omega^-_k(n)=
\left\{
  \begin{array}{ll}
    -1, & \hbox{$n=k$;} \\
    \omega(n), & \hbox{$n\neq k$,}
  \end{array}
\right.
\end{array}
\end{equation}
which, of course, remain elements of $\Omega$.

The following proposition is a slight variant of a result given in \cite{priv}, which shows that the annihilation operator $\partial_k$ acts just like a difference operator.

\begin{lemma}\label{lem-4-1}\cite{priv}
Let $k\geq 0$ and $\xi \in \mathsf{S}$. Then
\begin{equation}\label{eq-4-2}
  \partial_k \xi(\omega) = \sqrt{p_kq_k}\,\left[\xi(\omega^+_k)-\xi(\omega^-_k)\right],\quad  \omega\in\Omega.
\end{equation}
\end{lemma}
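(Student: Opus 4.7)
The plan is to reduce to the basis $\{Z_\sigma\}_{\sigma\in\Gamma}$ by linearity and then verify the identity directly by a small case analysis.

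\medskip

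\noindent\textbf{Step 1 (Reduction to basis elements).} Both sides of (\ref{eq-4-2}) are $\mathbb{R}$-linear in $\xi$, and every $\xi\in\mathsf{S}$ is a finite linear combination of the $Z_\sigma$'s. So it suffices to prove the identity with $\xi=Z_\sigma$ for an arbitrary $\sigma\in\Gamma$.

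\medskip

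\noindent\textbf{Step 2 (Pointwise values of $Z_k$ at $\omega_k^{\pm}$).} From the definition $Z_n=(\zeta_n+q_n-p_n)/(2\sqrt{p_nq_n})$ and $\zeta_n(\omega)=\omega(n)$, a direct substitution using (\ref{eq-4-1}) gives
\begin{equation*}
Z_k(\omega_k^+)=\sqrt{q_k/p_k}=\theta_k,\qquad Z_k(\omega_k^-)=-\sqrt{p_k/q_k}=-1/\theta_k,
\end{equation*}
and for $j\neq k$, $Z_j(\omega_k^{\pm})=Z_j(\omega)$, since $\omega_k^{\pm}$ coincide with $\omega$ away from $k$.

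\medskip

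\noindent\textbf{Step 3 (Case split on $k\in\sigma$).} If $k\notin\sigma$, then $Z_\sigma$ involves none of the coordinates $\zeta_k$, hence $Z_\sigma(\omega_k^+)=Z_\sigma(\omega_k^-)=Z_\sigma(\omega)$; the right-hand side of (\ref{eq-4-2}) vanishes, and the left-hand side vanishes as well by Lemma~\ref{lem-2-1} since $\mathbf{1}_\sigma(k)=0$.

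If $k\in\sigma$, factor $Z_\sigma=Z_k\cdot Z_{\sigma\setminus k}$, where $Z_{\sigma\setminus k}$ is constant in the $k$th coordinate. Then
\begin{equation*}
Z_\sigma(\omega_k^+)-Z_\sigma(\omega_k^-)=\big[Z_k(\omega_k^+)-Z_k(\omega_k^-)\big]Z_{\sigma\setminus k}(\omega)=\Big(\theta_k+\tfrac{1}{\theta_k}\Big)Z_{\sigma\setminus k}(\omega).
\end{equation*}
The elementary computation $\theta_k+1/\theta_k=(p_k+q_k)/\sqrt{p_kq_k}=1/\sqrt{p_kq_k}$ then yields
\begin{equation*}
\sqrt{p_kq_k}\bigl[Z_\sigma(\omega_k^+)-Z_\sigma(\omega_k^-)\bigr]=Z_{\sigma\setminus k}(\omega)=(\partial_k Z_\sigma)(\omega),
\end{equation*}
the last equality being Lemma~\ref{lem-2-1}.

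\medskip

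\noindent\textbf{Expected obstacles.} There is essentially no analytic difficulty here: no limits, no density argument across a closure is needed because the statement is restricted to $\mathsf{S}$, and both sides of (\ref{eq-4-2}) are defined pointwise on $\Omega$. The only mildly delicate point is bookkeeping the prefactor $\sqrt{p_kq_k}$, which comes out cleanly from the identity $\theta_k+1/\theta_k=1/\sqrt{p_kq_k}$; this is where the specific normalization in (\ref{eq-2-3}) is doing all the work.
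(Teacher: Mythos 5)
Your proposal is correct and follows essentially the same route as the paper: reduce by linearity to $\xi=Z_\sigma$, record the values of $Z_j$ at $\omega_k^{\pm}$, and split into the cases $k\notin\sigma$ and $k\in\sigma$, where the identity $\sqrt{p_kq_k}\,(\sqrt{q_k/p_k}+\sqrt{p_k/q_k})=p_k+q_k=1$ produces the clean prefactor. The only difference is cosmetic: you isolate the factor $Z_k(\omega_k^+)-Z_k(\omega_k^-)=\theta_k+1/\theta_k$ up front, whereas the paper carries the two terms separately through the computation.
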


\begin{proof}
We need only to show that (\ref{eq-4-2}) holds for each $\xi = Z_{\sigma}$ with $\sigma\in \Gamma$.
Let $\sigma\in \Gamma$. Then it follows from (\ref{eq-2-1}), (\ref{eq-2-3}) and (\ref{eq-4-1}) that
\begin{equation}\label{eq-4-3}
  Z_j(\omega) =  Z_j(\omega^+_k)=Z_j(\omega^-_k),\quad  \omega\in\Omega,\, j\neq k,
\end{equation}
and
\begin{equation}\label{eq-4-4}
  Z_k(\omega^+_k)=\sqrt{\frac{q_k}{p_k}},\quad Z_k(\omega^-_k)=-\sqrt{\frac{p_k}{q_k}}.
\end{equation}
Thus, in case of $k\notin \sigma$, $Z_{\sigma}(\omega) =  Z_{\sigma}(\omega^+_k)=Z_{\sigma}(\omega^-_k)$ for $\omega\in\Omega$, which implies
\begin{equation*}
  \sqrt{p_kq_k}\,\left[Z_{\sigma}(\omega^+_k)  - Z_{\sigma}(\omega^-_k)\right]=0, \quad\omega\in \Omega,
\end{equation*}
which, together the formula $\partial_kZ_{\sigma}= \mathbf{1}_{\sigma}(k)Z_{\sigma\setminus k}$, gives
\begin{equation*}
   \partial_k Z_{\sigma}(\omega) = \sqrt{p_kq_k}\,\left[Z_{\sigma}(\omega^+_k)-Z_{\sigma}(\omega^-_k)\right],\quad  \omega\in\Omega.
\end{equation*}
Now if $k\in \sigma$, then
$Z_{\sigma\setminus k}(\omega) =  Z_{\sigma\setminus k}(\omega^+_k)=Z_{\sigma\setminus k}(\omega^-_k)$ for $\omega\in\Omega$,
which together with (\ref{eq-4-4}) yields
\begin{equation*}
\begin{split}
Z_{\sigma\setminus k}(\omega)
   &=\sqrt{p_kq_k}\,\left[Z_{\sigma\setminus k}(\omega^+_k)\sqrt{\frac{q_k}{p_k}}  + Z_{\sigma\setminus k}(\omega^-_k)\sqrt{\frac{p_k}{q_k}}\right]\\
    &=\sqrt{p_kq_k}\,\left[Z_{\sigma\setminus k}(\omega^+_k)Z_k(\omega^+_k)  - Z_{\sigma\setminus k}(\omega^-_k)Z_k(\omega^-_k)\right]\\
    &=\sqrt{p_kq_k}\,\left[Z_{\sigma}(\omega^+_k)  - Z_{\sigma}(\omega^-_k)\right],\quad \omega\in\Omega,
\end{split}
\end{equation*}
which, together the formula $\partial_kZ_{\sigma}= \mathbf{1}_{\sigma}(k)Z_{\sigma\setminus k}$, still leads to
\begin{equation*}
  \partial_k Z_{\sigma}(\omega) = \sqrt{p_kq_k}\,\left[Z_{\sigma}(\omega^+_k)-Z_{\sigma}(\omega^-_k)\right],\quad \omega\in\Omega.
\end{equation*}
This completes the proof.
\end{proof}

\begin{definition}\label{def-4-1}
A contraction function is a function $C\colon \mathbb{R}\rightarrow \mathbb{R}$ with
$C(0)=0$, and $|C(s)-C(t)|\leq |s-t|$ for all $s$, $t\in \mathbb{R}$.
\end{definition}

As usual, we use $C\circ\xi$ to mean the composition of a contraction function $C$ and a random variable $\xi$.
The next theorem then shows that $\mathsf{S}$ is invariant under the action of contraction functions.

\begin{theorem}\label{thr-4-2}
Let $C$ be a contraction function.
Then $C\circ\xi \in \mathsf{S}$ for all $\xi \in \mathsf{S}$.
\end{theorem}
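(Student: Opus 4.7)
The plan is to exploit the filtration structure already established: $\mathsf{S} = \bigcup_{n=0}^{\infty}\mathsf{S}_n$, where $\mathsf{S}_n$ consists of the $\mathscr{F}_n$-measurable elements of $L^2(\Omega)$ and is $2^{n+1}$-dimensional. The reduction I would make is: it suffices to show that each $\mathsf{S}_n$ is invariant under composition with any contraction function $C$.

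So fix $\xi \in \mathsf{S}$. Then $\xi \in \mathsf{S}_n$ for some $n$, hence $\xi$ is $\mathscr{F}_n = \sigma(Z_0,\ldots,Z_n)$-measurable. Because each $Z_k$ takes exactly two values (namely $\theta_k$ and $-1/\theta_k$ by \eqref{eq-2-4}), the random vector $(Z_0,\ldots,Z_n)$ takes at most $2^{n+1}$ values; equivalently, $\mathscr{F}_n$ has finitely many atoms. Consequently $\xi$ is a simple, bounded random variable. Since $C$ is Borel (indeed continuous, being Lipschitz) and $C(0)=0$, the composition $C\circ\xi$ is again $\mathscr{F}_n$-measurable, simple and bounded; in particular $C\circ\xi \in L^2(\Omega)$ because $|C\circ\xi| = |C\circ\xi - C(0)| \leq |\xi|$. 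Therefore $C\circ\xi \in \mathsf{S}_n \subset \mathsf{S}$.

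I do not expect any substantive obstacle here; the statement is essentially a structural observation, and the work is really in recognizing that each $\mathsf{S}_n$ coincides with $L^2(\Omega,\mathscr{F}_n,\mathbb{P})$, which is finite-dimensional and closed under bounded Borel composition. The only mildly delicate point is verifying that $C\circ\xi$ still lies in $L^2$, which follows immediately from the contraction inequality $|C(t)| \leq |t|$. No expansion of $C\circ\xi$ in the basis $\{Z_\sigma : \sigma \in \Gamma_{n]}\}$ is needed; membership in the span is automatic from finite-dimensionality of $\mathsf{S}_n$ and measurability of $C\circ\xi$ with respect to $\mathscr{F}_n$.
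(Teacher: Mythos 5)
Your proof is correct and follows essentially the same route as the paper: reduce to a single $\mathsf{S}_n$, then observe that $C\circ\xi$ remains $\mathscr{F}_n$-measurable and square integrable, hence stays in $\mathsf{S}_n$. The paper's version is terser (it simply cites continuity of $C$), while you spell out the finite-atom structure of $\mathscr{F}_n$ and the bound $|C\circ\xi|\leq|\xi|$; the substance is the same.
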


\begin{proof}
Let $\xi \in \mathsf{S}$. Then there exists some $n\geq 0$ such that $\xi \in \mathsf{S}_n$, which implies
$C\circ\xi \in \mathsf{S}_n$ since $C$ is a continuous function. Thus $C\circ\xi \in \mathsf{S}$.
\end{proof}

\begin{theorem}\label{thr-4-3}
Let $\xi \in L^2(\Omega)$ and $C$ a contraction function.
Then for all $k\geq 0$ it holds that
\begin{equation}\label{eq-4-5}
\|\partial_k(C\circ\xi)\| \leq \|\partial_k\xi\|.
\end{equation}
\end{theorem}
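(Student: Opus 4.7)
The plan is to prove the inequality first on the dense subspace $\mathsf{S}$ by exploiting the difference-operator representation in Lemma~\ref{lem-4-1}, and then extend it to all of $L^2(\Omega)$ by a density argument based on the boundedness of $\partial_k$ and the Lipschitz property of $C$.

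First I would reduce to the case $\xi\in\mathsf{S}$. Given such $\xi$, Theorem~\ref{thr-4-2} guarantees that $C\circ\xi\in\mathsf{S}$, so Lemma~\ref{lem-4-1} applies to both $\xi$ and $C\circ\xi$. Evaluating both sides pointwise gives
\begin{equation*}
  \partial_k(C\circ\xi)(\omega) = \sqrt{p_kq_k}\,\bigl[C(\xi(\omega_k^+)) - C(\xi(\omega_k^-))\bigr],\qquad \omega\in\Omega.
\end{equation*}
The defining property $|C(s)-C(t)|\leq |s-t|$ then yields the pointwise estimate
\begin{equation*}
  |\partial_k(C\circ\xi)(\omega)|\leq \sqrt{p_kq_k}\,|\xi(\omega_k^+)-\xi(\omega_k^-)| = |\partial_k\xi(\omega)|,
\end{equation*}
and squaring and integrating against $\mathbb{P}$ gives $\|\partial_k(C\circ\xi)\|\leq \|\partial_k\xi\|$ on $\mathsf{S}$.

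Next I would pass to general $\xi\in L^2(\Omega)$. Note first that $C\circ\xi\in L^2(\Omega)$ because $|C\circ\xi|\leq|\xi|$ (using $C(0)=0$ together with the Lipschitz bound). By density of $\mathsf{S}$ in $L^2(\Omega)$, choose $\xi_n\in\mathsf{S}$ with $\xi_n\to\xi$ in $L^2(\Omega)$. The Lipschitz estimate gives
\begin{equation*}
  \|C\circ\xi_n - C\circ\xi\|\leq \|\xi_n-\xi\|\longrightarrow 0,
\end{equation*}
and since $\partial_k$ is bounded on $L^2(\Omega)$ (Lemma~\ref{lem-2-1}), both $\partial_k\xi_n\to\partial_k\xi$ and $\partial_k(C\circ\xi_n)\to\partial_k(C\circ\xi)$ in $L^2(\Omega)$. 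Passing to the limit in the inequality $\|\partial_k(C\circ\xi_n)\|\leq\|\partial_k\xi_n\|$ established in the first step yields the claim.

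The only genuinely delicate point is checking that Lemma~\ref{lem-4-1} truly applies to $C\circ\xi$ pointwise on $\Omega$, but this is immediate once Theorem~\ref{thr-4-2} places $C\circ\xi$ back inside $\mathsf{S}$. The rest of the argument is a standard density-and-continuity closure, so no serious obstacle is expected.
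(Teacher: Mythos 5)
Your proposal is correct and follows essentially the same route as the paper: prove the pointwise estimate on $\mathsf{S}$ via Lemma~\ref{lem-4-1} and Theorem~\ref{thr-4-2}, then pass to general $\xi$ by density, using the Lipschitz bound $\|C\circ\xi_n - C\circ\xi\|\leq\|\xi_n-\xi\|$ and the boundedness of $\partial_k$. No gaps.
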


\begin{proof}
Let $k\geq0$. Then we can take a sequence $(\xi_n)_{n\geq 1}$ in $\mathsf{S}$ such that $\|\xi_n -\xi\|\rightarrow 0$ as $n\rightarrow \infty$.
It follows from Theorem~\ref{thr-4-2} that $\{C\circ\xi_n\}_{n\geq 1} \subset \mathsf{S}$. Thus,
for each $n\geq 1$,
by using Lemma~\ref{lem-4-1} and the contraction property of $C$, we have
\begin{equation*}
\begin{split}
  |\partial_k(C\circ\xi_n)(\omega)|
   &=  \sqrt{p_kq_k}\,|C\circ\xi_n(\omega^+_k)-C\circ\xi_n(\omega^-_k)|\\
   &\leq \sqrt{p_kq_k}\,|\xi_n(\omega^+_k)-\xi_n(\omega^-_k)|\\
   &= |\partial_k\xi_n(\omega)|
\end{split}
\end{equation*}
with $\omega\in \Omega$, which implies
\begin{equation*}
\|\partial_k(C\circ\xi_n)\|\leq \|\partial_k\xi_n\|,\quad  n \geq 1.
\end{equation*}
It is easy to see that
$\|C\circ\xi_n-C\circ\xi\|\rightarrow 0$ as $n\rightarrow \infty$.
Thus, by letting $n\rightarrow \infty$ in the above inequality, we finally come to (\ref{eq-4-5}).
\end{proof}

\begin{theorem}\label{thr-4-4}
The $w$-energy form $(\mathcal{E}_w,\mathsf{D}_w)$ has the contraction property, namely,
for all contraction function $C\colon \mathbb{R}\rightarrow \mathbb{R}$ and all $\xi\in \mathsf{D}_w$,
it holds that $C\circ\xi\in \mathsf{D}_w$ and
\begin{equation}\label{eq-4-6}
  \mathcal{E}_w(C\circ\xi,C\circ\xi) \leq \mathcal{E}_w(\xi,\xi).
\end{equation}
\end{theorem}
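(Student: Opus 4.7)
The plan is to derive the contraction property of $(\mathcal{E}_w,\mathsf{D}_w)$ almost directly from the pointwise bound on $\|\partial_k(C\circ\xi)\|$ that Theorem~\ref{thr-4-3} already provides, together with the square-sum identity that appeared in the proof of Theorem~\ref{thr-3-1}.

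First I would verify that $C\circ\xi$ lies in $L^2(\Omega)$, which is needed before membership in $\mathsf{D}_w$ makes sense. Since $C(0)=0$ and $C$ is a contraction, one has $|C(t)|\leq |t|$ for all $t\in\mathbb{R}$, hence $|C\circ\xi(\omega)|\leq |\xi(\omega)|$ pointwise, so $\|C\circ\xi\|\leq \|\xi\|<\infty$. Next I would apply Theorem~\ref{thr-4-3} termwise: for every $k\geq 0$,
\begin{equation*}
  w(k)\,\|\partial_k(C\circ\xi)\|^{2} \leq w(k)\,\|\partial_k \xi\|^{2}.
\end{equation*}
Summing over $k$ and using the identity
\begin{equation*}
  \sum_{k=0}^{\infty} w(k)\,\|\partial_k \eta\|^{2} = \sum_{\sigma\in\Gamma} \#_w(\sigma)\,|\langle Z_\sigma,\eta\rangle|^{2}
\end{equation*}
established inside the proof of Theorem~\ref{thr-3-1}, I conclude that
\begin{equation*}
  \sum_{\sigma\in\Gamma}\#_w(\sigma)\,|\langle Z_\sigma, C\circ\xi\rangle|^{2}
  = \sum_{k=0}^{\infty} w(k)\,\|\partial_k(C\circ\xi)\|^{2}
  \leq \sum_{k=0}^{\infty} w(k)\,\|\partial_k\xi\|^{2}
  = \mathcal{E}_w(\xi,\xi)<\infty,
\end{equation*}
which simultaneously gives $C\circ\xi\in\mathsf{D}_w$ and the desired inequality $\mathcal{E}_w(C\circ\xi,C\circ\xi)\leq \mathcal{E}_w(\xi,\xi)$.

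There is essentially no remaining obstacle; all the work has been front-loaded into Theorem~\ref{thr-4-3}, where the pointwise difference-operator representation of $\partial_k$ on $\mathsf{S}$ (Lemma~\ref{lem-4-1}) combined with an approximation from the dense subspace $\mathsf{S}$ was used to transfer the contraction inequality from the nice subspace to all of $L^{2}(\Omega)$. The only subtle point in the present theorem is making sure $C\circ\xi$ belongs to $L^{2}(\Omega)$ so that the sum $\sum_\sigma \#_w(\sigma)|\langle Z_\sigma, C\circ\xi\rangle|^{2}$ and the expansion $C\circ\xi=\sum_\sigma \langle Z_\sigma, C\circ\xi\rangle Z_\sigma$ used implicitly in the identity above are legitimate; this is handled by the elementary estimate $|C(t)|\leq |t|$ noted above.
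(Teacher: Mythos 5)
Your proposal is correct and follows essentially the same route as the paper: apply Theorem~\ref{thr-4-3} termwise, sum against $w(k)$, and use the Tonelli-type identity $\sum_k w(k)\|\partial_k\eta\|^2=\sum_{\sigma}\#_w(\sigma)|\langle Z_\sigma,\eta\rangle|^2$ to conclude both $C\circ\xi\in\mathsf{D}_w$ and the inequality. Your explicit check that $|C(t)|\leq|t|$ forces $C\circ\xi\in L^2(\Omega)$ is a small point the paper leaves implicit, but otherwise the two arguments coincide.
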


\begin{proof}
Let $\xi\in \mathsf{D}_w$ and $C\colon \mathbb{R}\rightarrow \mathbb{R}$ a contraction function. Then by Theorem~\ref{thr-4-3} we have
\begin{equation*}
\begin{split}
\sum_{\sigma \in \Gamma}\#_w(\sigma) |\langle Z_{\sigma}, C\circ\xi\rangle|^2
  &= \sum_{k=0}^{\infty}w(k)\|\partial_k (C\circ \xi)\|^2\\
  &\leq \sum_{k=0}^{\infty}w(k)\|\partial_k \xi\|^2\\
  &= \sum_{\sigma \in \Gamma}\#_w(\sigma) |\langle Z_{\sigma}, \xi\rangle|^2,
\end{split}
\end{equation*}
which, together with the assumption $\xi\in \mathsf{D}_w$, implies $C\circ\xi\in \mathsf{D}_w$.
It then follows from the definition of $\mathcal{E}_w$ that
\begin{equation*}
  \mathcal{E}_w(C\circ\xi,C\circ\xi)
= \sum_{k=0}^{\infty}w(k)\|\partial_k(C\circ\xi)\|^2
\leq \sum_{k=0}^{\infty}w(k)\|\partial_k\xi\|^2
=\mathcal{E}_w(\xi,\xi).
\end{equation*}
\end{proof}

In the literature, a closed, positive, symmetric, densely defined bilinear form in an $L^2$-space is called a Dirichlet form if
it additionally has the contraction property (see, e.g.  \cite{albe-1,fuku, hida}).
Summing up our discussions above, we actually arrive at the next important conclusion.

\begin{corollary}\label{coro-4-5}
The $w$-energy form $(\mathcal{E}_w,\mathsf{D}_w)$ is a Dirichlet form in $L^2(\Omega)$.
\end{corollary}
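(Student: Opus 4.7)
The plan is simply to collect the properties that have already been established in the preceding sections and check them against the definition of a Dirichlet form recalled just before the corollary, namely: a closed, positive, symmetric, densely defined bilinear form on an $L^2$-space that possesses the contraction property.

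First I would observe that bilinearity of $\mathcal{E}_w$ on $\mathsf{D}_w\times\mathsf{D}_w$ is immediate from Definition~\ref{def-3-1}, since each term $\langle\partial_k\xi,\partial_k\eta\rangle$ is bilinear in $(\xi,\eta)$ and the series converges absolutely by Theorem~\ref{thr-3-1}. Symmetry and positivity follow from the symmetry and positivity of the inner product $\langle\cdot,\cdot\rangle$ on $L^2(\Omega)$ combined with the nonnegativity of $w$: indeed $\mathcal{E}_w(\xi,\eta)=\mathcal{E}_w(\eta,\xi)$ and $\mathcal{E}_w(\xi,\xi)=\sum_{k=0}^{\infty}w(k)\|\partial_k\xi\|^2\geq 0$. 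These are also the observations the author makes in the paragraph right after Definition~\ref{def-3-1}.

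Next I would invoke Theorem~\ref{thr-3-1} to assert that $\mathsf{D}_w$ is a dense linear subspace of $L^2(\Omega)$, so the form is densely defined. Then I would invoke Theorem~\ref{thr-3-2} to conclude that $(\mathcal{E}_w,\mathsf{D}_w)$ is closed, and Theorem~\ref{thr-4-4} to conclude that it has the contraction property: for every contraction function $C\colon\mathbb{R}\rightarrow\mathbb{R}$ and every $\xi\in\mathsf{D}_w$, one has $C\circ\xi\in\mathsf{D}_w$ and $\mathcal{E}_w(C\circ\xi,C\circ\xi)\leq\mathcal{E}_w(\xi,\xi)$.

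There is no genuine obstacle here; the corollary is a bookkeeping consequence of Theorems~\ref{thr-3-1}, \ref{thr-3-2}, and \ref{thr-4-4} together with the trivial bilinearity, symmetry and positivity of $\mathcal{E}_w$. The only thing to be careful about is to match terminology with the definition of a Dirichlet form recorded immediately before the corollary, so that each required property is explicitly pointed to the theorem that supplies it.
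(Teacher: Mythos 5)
Your proposal is correct and follows exactly the route the paper takes: the corollary is stated as a summary of Theorems~\ref{thr-3-1}, \ref{thr-3-2}, and \ref{thr-4-4} together with the elementary bilinearity, symmetry, and positivity noted after Definition~\ref{def-3-1}. Nothing is missing.
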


\section{Application}\label{sec-5}

Markov semigroups are semigroups of contraction operators that are closely related to Markov processes in probability theory.
In the final section, we consider an interesting class of semigroups of contraction operators on $L^2(\Omega)$.
As an application of our results in the previous sections, we will prove that this class of semigroups are
actually Markov ones.

Let $w\colon \mathbb{N}\rightarrow \mathbb{R}$ be a nonnegative function. For
each $t\geq 0$, one can define a contraction operator $P^w_t$ on $L^2(\Omega)$ as
\begin{equation}\label{eq-5-1}
  P^w_t\xi = \sum_{\sigma \in \Gamma} e^{-t\#_w(\sigma)} \langle Z_{\sigma}, \xi\rangle Z_{\sigma},\quad \xi \in L^2(\Omega),
\end{equation}
where $\#_w(\cdot)$ is the $w$-counting measure defined by (\ref{eq-3-1}).
One can verify that the family $P^w=(P^w_t)_{t\geq 0}$ forms a strongly continuous semigroup of contraction
operators on $L^2(\Omega)$.

\begin{definition}\label{def-5-1}
The semigroup $P^w=(P^w_t)_{t\geq 0}$ defined by (\ref{eq-5-1}) is called the $w$-Ornstein-Uhlenbeck semigroup on $L^2(\Omega)$.
\end{definition}

We note that, with $w(k)\equiv 1$, the $w$-Ornstein-Uhlenbeck semigroup becomes the usual Ornstein-Uhlenbeck semigroup
considered in \cite{priv, w-lc}.

\begin{theorem}\label{thr-5-1}
Let $w\colon \mathbb{N}\rightarrow \mathbb{R}$ be a nonnegative function. Then the $w$-Ornstein-Uhlenbeck semigroup
$P^w =(P^w_t)_{t\geq 0}$ is a Markov semigroup on $L^2(\Omega)$, namely for all $t\geq 0$ it holds
that
\begin{equation}\label{eq-5-2}
 0\leq P^w_t\xi \leq 1\quad  \text{$\mathbb{P}$-a.e.}
\end{equation}
whenever $\xi\in L^2(\Omega)$ with $0\leq \xi \leq 1$ $\mathbb{P}$-a.e.
\end{theorem}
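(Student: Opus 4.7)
The plan is to reduce the Markov property of $P^w$ to the contraction property of its Dirichlet form via the standard resolvent/variational argument from the theory of closed forms.

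First, I would recognise that $P^w_t = e^{-tA_w}$, where $A_w$ is the non-negative self-adjoint operator diagonalised by $\{Z_\sigma\}_{\sigma\in\Gamma}$ with eigenvalues $\#_w(\sigma)$. A direct expansion using Lemma~\ref{lem-2-1} gives
\begin{equation*}
\mathcal{E}_w(\xi,\xi) \;=\; \sum_{\sigma\in\Gamma}\#_w(\sigma)|\langle Z_\sigma,\xi\rangle|^2 \;=\; \langle A_w^{1/2}\xi, A_w^{1/2}\xi\rangle,\qquad \xi\in\mathsf{D}_w=\operatorname{Dom}(A_w^{1/2}),
\end{equation*}
so that $(\mathcal{E}_w,\mathsf{D}_w)$ is precisely the closed symmetric quadratic form associated with the strongly continuous semigroup $P^w$ in the standard Hilbert-space correspondence.

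Second, for each $\lambda > 0$ set $J_\lambda := \lambda(\lambda+A_w)^{-1}$. Invoking Theorem~\ref{thr-3-2}, the Riesz representation theorem on the Hilbert space $(\mathsf{D}_w,\widehat{\mathcal{E}}_w)$ shows that, for every $\xi\in L^2(\Omega)$, the element $u := J_\lambda\xi$ is the unique minimiser in $\mathsf{D}_w$ of the functional
\begin{equation*}
F_\xi(v) \;:=\; \mathcal{E}_w(v,v) + \lambda\|v - \xi\|^2.
\end{equation*}
Now assume $0\le\xi\le 1$ $\mathbb{P}$-a.e., and let $C(s):=(s\vee 0)\wedge 1$, which is a contraction function in the sense of Definition~\ref{def-4-1} and satisfies $C\circ\xi=\xi$ a.e. By Theorem~\ref{thr-4-4}, $C\circ u\in\mathsf{D}_w$ with $\mathcal{E}_w(C\circ u,C\circ u)\le\mathcal{E}_w(u,u)$; combined with the pointwise estimate $|C\circ u-\xi|=|C\circ u-C\circ\xi|\le|u-\xi|$, this yields $F_\xi(C\circ u)\le F_\xi(u)$. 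Uniqueness of the minimiser forces $C\circ u=u$, hence $0\le J_\lambda\xi\le 1$ $\mathbb{P}$-a.e.

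Finally, the Hille--Yosida exponential formula $P^w_t\xi = \lim_{n\to\infty} J_{n/t}^{\,n}\xi$ in $L^2(\Omega)$ transfers the bound from the resolvent to the semigroup: iterating the previous step, each $J_{n/t}^{\,n}$ preserves the interval $[0,1]$ a.e., and a.e.\ two-sided bounds survive passage to an $L^2$-limit along an a.e.-convergent subsequence, yielding (\ref{eq-5-2}). The main technical hurdle I anticipate is setting up the variational characterisation of $J_\lambda$ and checking the chain $F_\xi(C\circ u)\le F_\xi(u)$ carefully; once that is in place, Theorem~\ref{thr-4-4} does essentially all the work, and the rest is spectral-theoretic routine.
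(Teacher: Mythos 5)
Your proposal is correct and follows the same underlying strategy as the paper: identify $\mathcal{E}_w$ as the closed quadratic form of the generator of $P^w$ (your $A_w$ is the paper's $N_w$, and your identity $\mathcal{E}_w(\xi,\xi)=\|A_w^{1/2}\xi\|^2$ on $\mathsf{D}_w=\operatorname{Dom}(A_w^{1/2})$ is the quadratic-form version of the paper's relation $\mathcal{E}_w(\xi,\eta)=\langle\xi,N_w\eta\rangle$), and then pass from the contraction property of the form to the Markov property of the semigroup. The one genuine difference is that the paper stops at this point and delegates the implication ``Dirichlet form $\Rightarrow$ Markov semigroup'' to the general theory (citing Albeverio), whereas you actually prove it: the variational characterisation of $J_\lambda=\lambda(\lambda+A_w)^{-1}$ as the unique minimiser of $F_\xi$, the use of the unit contraction $C(s)=(s\vee 0)\wedge 1$ together with Theorem~\ref{thr-4-4} and uniqueness to get $0\le J_\lambda\xi\le 1$, and the exponential formula $P^w_t=\lim_n J_{n/t}^n$ with an a.e.\ convergent subsequence to transfer the bound to $P^w_t$. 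All of these steps are sound (in particular $C\circ\xi=\xi$ a.e.\ and $|C\circ u-\xi|\le|u-\xi|$ pointwise, so $F_\xi(C\circ u)\le F_\xi(u)$ as claimed), so your version buys a self-contained argument at the cost of reproducing a standard piece of the general theory; the paper's version is shorter but rests on an external citation.
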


\begin{proof}
Consider the operator $N_w$ in $L^2(\Omega)$ given by
\begin{equation}\label{eq-5-3}
  N_w\xi = \sum_{\sigma \in \Gamma} \#_w(\sigma) \langle Z_{\sigma},\xi\rangle Z_{\sigma},\quad \xi \in \mathrm{Dom}\,N_w,
\end{equation}
where $\mathrm{Dom}\,N_w$ denotes the domain of the operator $N_w$, which is defined as
\begin{equation*}
\mathrm{Dom}\,N_w =\Big\{\, \xi \in L^2(\Omega) \Bigm| \sum_{\sigma \in \Gamma} [\#_w(\sigma)]^2 |\langle Z_{\sigma},\xi\rangle |^2 <\infty \,\Big\},
\end{equation*}
where $\#_w(\cdot)$ is the $w$-counting measure defined by (\ref{eq-3-1}).
It is easy to show that $N_w$ is a positive, self-adjoint, densely defined operator in $L^2(\Omega)$.
And moreover, by a direct calculation, we find that
\begin{equation}\label{eq-5-4}
   P^w_t = e^{-tN_w},\quad t\geq 0,
\end{equation}
where $e^{-tN_w}$ means the spectral integral of the function $s \mapsto e^{-ts}$ with respect to the spectral
measure of $N_w$. Thus $-N_w$ is exactly the infinitesimal generator of the semigroup
$P^w =(P^w_t)_{t\geq 0}$.

We now verify that $N_w$ and $\mathcal{E}_w$ share the following relationship
\begin{equation}\label{eq-5-5}
  \mathcal{E}_w(\xi, \eta) = \langle \xi,N_w\eta\rangle,\quad \xi\in \mathsf{D}_w,\, \eta \in  \mathrm{Dom}\,N_w
\end{equation}
with $\mathrm{Dom} N_w \subset \mathsf{D}_w$.
In fact, if $\eta \in \mathrm{Dom}\,N_w$, then by the well known Cauchy inequality we have
\begin{equation*}
  \sum_{\sigma\in \Gamma}\#_w(\sigma)|\langle Z_{\sigma}, \eta\rangle|^2
  \leq  \left\{\sum_{\sigma\in \Gamma}[\#_w(\sigma)]^2|\langle Z_{\sigma}, \eta\rangle|^2\right\}^{\frac{1}{2}}
        \left\{\sum_{\sigma\in \Gamma}|\langle Z_{\sigma}, \eta\rangle|^2\right\}^{\frac{1}{2}}
  <\infty,
\end{equation*}
which implies $\eta \in \mathsf{D}_w$. Thus $\mathrm{Dom}\,N_w \subset \mathsf{D}_w$.
Let $\xi\in \mathsf{D}_w$ and $\eta \in  \mathrm{Dom}\,N_w$. Then, by (\ref{eq-3-4}) and (\ref{eq-3-5}), we have
\begin{equation*}
\begin{split}
  \mathcal{E}_w(\xi, \eta)
    &=\sum_{k=0}^{\infty}w(k)\langle \partial_k\xi, \partial_k \eta\rangle\\
    &=  \sum_{k=0}^{\infty}w(k)\sum_{\sigma \in \Gamma} \mathbf{1}_{\sigma}(k)\langle Z_{\sigma}, \xi\rangle\langle Z_{\sigma}, \eta\rangle\\
    &= \sum_{\sigma \in \Gamma}\#_w(\sigma) \langle Z_{\sigma}, \xi\rangle\langle Z_{\sigma}, \eta\rangle.
\end{split}
\end{equation*}
Here we take use of the absolute convergence of the following series
\begin{equation*}
  \sum_{\sigma \in \Gamma} \sum_{k=0}^{\infty} \mathbf{1}_{\sigma}(k)w(k)\langle Z_{\sigma}, \xi\rangle\langle Z_{\sigma}, \eta\rangle,
\end{equation*}
which can be verified straightforwardly.
On the other hand, by the definition of $N_w$ and the expansion $\xi = \sum_{\sigma \in \Gamma}\langle Z_{\sigma}, \xi\rangle Z_{\sigma}$, we have
\begin{equation*}
  \langle \xi,N_w\eta\rangle  = \sum_{\sigma \in \Gamma} \#_w(\sigma)\langle Z_{\sigma}, \xi\rangle\langle Z_{\sigma}, \eta\rangle.
\end{equation*}
Thus $\mathcal{E}_w(\xi, \eta) = \langle \xi,N_w\eta\rangle$.

Combining (\ref{eq-5-4}), (\ref{eq-5-5}) and Corollary~\ref{coro-4-5} with the general theory of Dirichlet forms \cite{albe-1}, we finally
know that
$P^w =(P^w_t)_{t\geq 0}$ is a Markov semigroup.
\end{proof}

\section{Discussion}\label{sec-6}

Let $X$ be a locally compact separable metric space and $\mu$ be a positive Radon measure on $X$.
For a Dirichlet form $\mathcal{E}$ in $L^2(X, \mu)$, one thing researchers are interested in is
whether it is regular \cite{fuku}.
Indeed, if $\mathcal{E}$ is a regular Dirichlet form in $L^2(X, \mu)$, then
there is a symmetric Hunt process $\mathrm{M}$ with $\mathcal{E}$ as its Dirichlet form.
If, additionally, $\mathcal{E}$ possesses the local property, then the process $\mathrm{M}$
is even a diffusion process \cite{fuku}.
In that case, the process $\mathrm{M}$ is called the diffusion process associated with the Dirichlet form $\mathcal{E}$.
It is well known that diffusion processes play an important role in many problems in mathematical physics.

As is shown, in our case $\mathcal{E}_w$ is a Dirichlet form in $L^2(\Omega) \equiv L^2(\Omega, \mathbb{P})$
for a positive function $w\colon \mathbb{N} \rightarrow \mathbb{R}$.
On the other hand, being endowed with an appropriate metric, $\Omega$ can become a compact separable metric space and $\mathbb{P}$ can become
a positive Radon measure on $\Omega$.
It is then natural to consider whether or not $\mathcal{E}_w$ is regular , and whether or not $\mathcal{E}_w$ possesses the local property.
Such problems, however, are far from being simple and might be related to deeper topological properties of $\Omega$.

\section*{Acknowledgement}

This work is supported by National Natural Science Foundation of China (Grant No. 11461061).

\end{document}